\newtheorem{lemma}{Lemma}
\newtheorem{proposition}[lemma]{Proposition}
\newtheorem{corollary}[lemma]{Corollary}
\newtheorem{theorem}[lemma]{Theorem}
\theoremstyle{definition}
\newtheorem{definition}[lemma]{Definition}
\theoremstyle{remark} 
\newtheorem*{remark}{Remark}
\newtheorem*{example}{Example}
\newcommand{\C}{\mathbb{C}}
\newcommand{\Q}{\mathbb{Q}}
\newcommand{\R}{\mathbb{R}}
\newcommand{\Z}{\mathbb{Z}}
\newcommand{\cO}{\mathcal{O}}
\DeclareMathOperator{\Hom}{Hom}
\DeclareMathOperator{\Fuk}{Fuk}
\DeclareMathSymbol{\shortminus}{\mathbin}{AMSa}{"39}
\begin{document}

%%%%%%%%%%%%%%%%%%%%%%%%%%%%%%%%%%%%%%%%%%%%%%%%%%%%%%%
%%%%%%%%%%%%%%%%%%%%%%%%%%%%%%%%%%%%%%%%%%%%%%%%%%%%%%%
%%%%%%%%%%%%%%%%%%%%%%%%%%%%%%%%%%%%%%%%%%%%%%%%%%%%%%%

\title{Microsheaves from Hitchin fibers via Floer theory}

\author{Vivek Shende}

\begin{abstract}
Fix a non-stacky component of the moduli of stable Higgs bundles, 
on which the Hitchin fibration is proper. 
We show that any smooth Hitchin fiber determines a microsheaf on the global nilpotent cone, 
that distinct fibers give rise to orthogonal microsheaves, and that the endomorphisms
of the microsheaf is isomorphic to the cohomology of the Hitchin fiber.   These results 
are consequences of recent advances in Floer theory.  
Natural constructions on our microsheaves provide plausible candidates for Hecke eigensheaves 
for the geometric Langlands correspondence. 
\end{abstract}

\maketitle

\thispagestyle{empty}

\section{Introduction}

Let $C$ be a smooth compact complex curve, $G$ a reductive group, and $\mathrm{Bun}_G(C)$ the moduli of $G$-bundles on $C$. 
In the context of the geometric Langlands correspondence, one in interested in `Hecke eigensheaves' on $\mathrm{Bun}_G(C)$. 

For geometric reasons (which we recall in Section \ref{sec: hecke}), it is natural to expect a relationship between said eigensheaves and 
fibers of the Hitchin integrable system on $T^* \mathrm{Bun}_G(C)$.  Various
mechanisms have been proposed, including quantization \cite{Beilinson-Drinfeld}, `brane quantization' 
\cite{Kapustin-Witten}, and non-abelian Hodge theory \cite{Donagi-Pantev}.  

\vspace{2mm}

The purpose of the present article is to explain that recent developments in symplectic geometry 
can be used to produce sheaves on $\mathrm{Bun}_G(C)$ directly from Hitchin fibers.  
Let us note right away that, unlike \cite{Beilinson-Drinfeld, Kapustin-Witten, Donagi-Pantev}, we produce
sheaves rather than $D$-modules, i.e. our objects are most directly relevant to the `Betti  geometric Langlands' of \cite{BenZvi-Nadler}.  Additionally, our  sheaves have coefficients, not in $\Q$ or $\C$,
but instead in the Novikov field
$$\Lambda := \bigg\{ \sum_{i = 1}^\infty c_i T^{\lambda_i} \, | c_i \in \Q, \lambda_i \in \R, \, \lambda_i \to \infty \bigg\}$$
Finally, we do not yet know how to prove the resulting objects are 
eigensheaves.  

However, let us also note that, unlike \cite{Beilinson-Drinfeld} and other related approaches, we require no input 
from representation theory of vertex algebras or loop groups etc. 

\vspace{2mm}

In fact, the objects we most directly produce are not sheaves, but rather microsheaves.  Let us recall this
notion, essentially introduced in \cite{Kashiwara-Schapira}.  Let $M$ be a manifold
and $sh(M)$ be a stable category of sheaves.  We recall that given $F \in sh(M)$, 
the microsupport $ss(F) \subset T^*M$ is a locus of directions along which
sections fail to propagate.  Such $ss(F)$ is  (essentially by definition)
preserved by scaling cotangent fibers, and (non obviously) co-isotropic in an appropriate sense.  
The category of locally constant sheaves, $loc(M)$, is characterized by the condition that the microsupport is
contained in the zero section.  
Under the Riemann-Hilbert correspondence, characteristic varieties of regular holonomic D-modules match the microsupports
of their sheaves of solutions. 

For $\Lambda \subset T^*M$, we write $sh_\Lambda(M)$ for the sheaves
whose microsupport is contained in $\Lambda$. 
There is a pre-sheaf of categories on $T^*M$ defined by 
$$\mu sh^{pre}(U) := sh(M) / sh_{T^*M \setminus U}(M)$$ 
We write $\mu sh$ for the corresponding sheaf of categories, 
and refer to elements of $\mu sh(U)$ as microsheaves on $U$.  
It is immediate from the definition that microsheaves depend only on the conic
saturation $\mu sh (U) = \mu sh(\R_{>0} U)$, and not difficult to see that $\mu sh(T^*M) = sh(M)$.  
%The microlocal characterization of the perverse t-structure in \cite[Chap. 10]{Kashiwara-Schapira}
%descends to microsheaves; see \cite{Waschkies, CKNS} for further discussion.  

These notions
extend in the obvious way to the case when $M$ is a smooth stack, like $\mathrm{Bun}_G(C)$.  
Inside $T^*\mathrm{Bun}_G(C)$ we have the conic open locus $\mathrm{Higgs}_G^s(C)$ of stable Higgs bundles, and we may
consider $\mu sh(\mathrm{Higgs}_G^s(C))$.   

Let us explain how such microsheaves relate to the more usual objects of interest.  We denote the locus of stable bundles as
$\mathrm{Bun}_G^s(C)$, and the locus of `very stable' bundles (those not admitting a nilpotent Higgs field) as $\mathrm{Bun}_G^{vs}(C)$.  
We denote the inclusions as
$$\mathrm{Bun}_G^{vs}(C)  \xrightarrow{v} \mathrm{Bun}_G^{s}(C) \xrightarrow{s} \mathrm{Bun}_G(C)$$
We also consider the inclusions
$$T^* \mathrm{Bun}_G^{s}(C) \xrightarrow{\eta} \mathrm{Higgs}_G^s(C) \xrightarrow{\kappa} T^* \mathrm{Bun}_G(C)$$
Finally we denote the Hitchin fibration by $h: T^*\mathrm{Bun}_G(C) \to B$, so $\mathcal{N} := h^{-1}(0) \subset T^* \mathrm{Bun}_G(C)$ is 
the global nilpotent cone of \cite{Laumon}.  When discussing (micro)support conditions, we also 
write $\mathcal{N}$ for the intersection of the nilpotent cone with 
any of the appropriate loci above.  

The aforementioned properties of microsheaf categories imply the following commutative diagram of sheaf and microsheaf restrictions: 

\vspace{2mm}
\noindent
\hspace{-5mm}
\begin{tikzcd}
\mu sh_{\mathcal{N}} (T^*\mathrm{Bun}_G(C)) \arrow[r, "\kappa^*"] \arrow[d, equal] & \mu sh_{\mathcal{N}} (\mathrm{Higgs}_G^s(C)) \arrow[r, "\eta^*"] & \mu sh_{\mathcal{N}} (T^* \mathrm{Bun}_G^{s}(C)) \arrow[d, equal] \arrow[r] 
& \mu sh_{\mathcal{N}}(T^* \mathrm{Bun}_G^{vs}(C)) \arrow[d, equal] \\
sh_{\mathcal{N}} (\mathrm{Bun}_G(C)) \arrow[rr, "s^*"] & & sh_{\mathcal{N}}(\mathrm{Bun}_G^s(C)) \arrow[r, "v^*"] & loc(\mathrm{Bun}_G^{vs}(C)) 
\end{tikzcd} 
\vspace{2mm}

For instance, from the above diagram one can see that an element of $\mu sh_{\mathcal{N}} (\mathrm{Higgs}_G^s(C))$ determines, in particular, 
a local system on  $\mathrm{Bun}_G^{vs}(C)$.

\vspace{2mm}

To avoid having
positive dimensional stabilizers at generic points of $\mathrm{Bun}_G$, we follow  \cite{Donagi-Pantev} in removing the connected component of the generic
stabilizer in the sense of  \cite[Appendix A]{AOV}, and similarly for Higgs bundles (this has no effect when $G$ is semisimple).   
Then $\mathrm{Higgs}_G^s(C)$ is always a smooth orbifold. 
We also recall that  moduli of bundles have connected components
indexed by $d \in \pi_1(G)$; we write $\mathrm{Higgs}_G^s(C)$ for the relevant component. 

For our purposes, we axiomatize the relevant facts about Higgs bundles and Hitchin fibrations into: 

\begin{definition} \label{c h i s}
    By a conic hyperk\"ahler integrable system, we mean a manifold $W$ with hyperk\"ahler complex structures $I,J,K$,  an $I$-holomorphic $\C^*$ action which nontrivially scales $\Omega_I$, and a proper holomorphic map $h: W \to A$, such that the generic fibers of $h$ are Lagrangian for $\Omega_I$ and the $\C^*$-action contracts $A$ to a single fixed point $0$.
\end{definition}

It is well known that $\mathrm{Higgs}_G^s(C)_d$ satisfies Definition \ref{c h i s} so long as (1) there are no strictly semistable Higgs bundles of the given $d$, and (2) there are no orbifold points on $\mathrm{Higgs}_G^s(C)_d$, e.g., when $G = GL_n$ and $d$ is coprime to $n$.  We are not certain how precisely to attribute this well known fact: let us mention the original works of Hitchin \cite{Hitchin, Hitchin-system}, and also further and fuller later developments
\cite{Donaldson, Corlette, Simpson-higgs, Simpson-moduli-1, Simpson-moduli-2, Faltings}.  

We may now state our main result: 

\begin{theorem} \label{intro main theorem} 
Let $h:W \to A$ be a conic hyperk\"ahler integrable system.  Fix any collection of smooth fibers
$L_i := h^{-1}(b_i) \subset W$ for distinct $b_i \in B$.  Then there are  microsheaves 
$F_i \in \mu sh_{h^{-1}(0)}(W)$
such that $F_i$ and $F_j$ are orthogonal for $i \ne j$, and 
$\mathrm{End}(F_i)$ is canonically identified with the cohomology of  $L_i$.  
\end{theorem}

Let us explain the main ideas and issues in the proof of Theorem \ref{intro main theorem}.  By work of Solomon and Verbitsky \cite{Solomon-Verbitsky}, such $L_i$ bound no holomorphic disks, hence define objects of the Fukaya category without further work.  We would like to then apply the Fukaya/sheaf comparison of \cite{GPS3}, but the $L_i$ are necessarily non-exact Lagrangians, and as such \cite{GPS3} does not apply to them directly.  Thus the main task of the present article is to show that one can add non-exact Lagrangians to the Fukaya categories studied in \cite{GPS1, GPS2, GPS3} without essentially enlarging the category.  The basic idea of the proof is that we resolve the diagonal bimodule by resolving the geometric diagonal, which {\em is} an exact Lagrangian, and hence to which the results of \cite{GPS2} apply.

\begin{remark} The requirement that $X$ is a manifold rather than orbifold  appears solely because
the theory of Fukaya categories of symplectic orbifolds is not developed, and in particular we do not 
have the analogue of \cite{GPS3} in this setting.  
\end{remark}

%%%%%%%%%%%%%%%%%%%%%%%%%%%%%%%%%%%%%%%%%%%%%%%%%%%%%%%
%%%%%%%%%%%%%%%%%%%%%%%%%%%%%%%%%%%%%%%%%%%%%%%%%%%%%%%
%%%%%%%%%%%%%%%%%%%%%%%%%%%%%%%%%%%%%%%%%%%%%%%%%%%%%%%

\vspace{2mm}
{\bf Acknowledgements.}  This note began life as an email to David Nadler, in the context of an ongoing discussion of
how to rigorously connect homological mirror symmetry with (Betti) geometric Langlands. 
I also thank Dima Arinkin, Ron Donagi, Dennis Gaitsgory, Tony Pantev, and John Pardon for helpful conversations.

This work was supported by
 Villum Fonden Villum Investigator grant 37814, Novo Nordisk Foundation grant NNF20OC0066298, and Danish National Research Foundation grant DNRF157, and NSF CAREER DMS-1654545.

\section{Hitchin fibers and Hecke eigensheaves} \label{sec: hecke}

Let us recall what Hecke eigensheaves are, and why one might hope to get them from fibers of the Hitchin system \cite{Beilinson-Drinfeld, Kapustin-Witten,
Donagi-Pantev}.  This section is purely motivational and logically unrelated to the results presented
in the remainder of the article. 

For a space $X$, we write $[X]$ for some appropriate category of $D$-modules or constructible sheaves on $X$. 
There are `Hecke operators' $H_\mu: [\mathrm{Bun}_G(C)] \to [\mathrm{Bun}_G(C) \times C]$, given by
convolution with respect to a correspondence parameterizing pairs of bundles which are isomorphic away from a single 
point $c \in C$, and whose difference at this point is controlled by a cocharacter $\mu$ of $G$.  

The geometric
Langlands conjecture asserts, in particular, the existence of certain $F \in [\mathrm{Bun}_G(C)]$ which are `Hecke eigensheaves' 
in the sense that 
$H_\mu ( F ) = F \, \boxtimes \, \rho^\mu(\chi)$, where $\chi$ is a $G^\vee$ local system on $C$, and $\rho^\mu$ is the 
representation corresponding to the character $\mu$ of $G^\vee$.  

Suppose given a functor $|\,\cdot\,|: [X] \to \mathrm{Subsets}(T^*X)$ carrying composition of integral transforms to composition of subset relations.  Such functors in the case of D-modules and constructible sheaves are given by characteristic cycle and microsupport, respectively. 
For these cases, one knows that 
one connected component of $|H_\mu| \subset T^* \mathrm{Bun}_G(C) \times T^* \mathrm{Bun}_G(C) \times T^* C$ is the closure of the conormal to the smooth locus of the Hecke correspondence; let us denote it $N_\mu$. 

One checks readily that 
$N_\mu \circ h^{-1}(b) = h^{-1}(b) \times \widetilde{C}_{b}^\mu$.  Here $\widetilde{C}_{b}^\mu \subset T^*C$ is the spectral cover
corresponding to the point $b \in B$ and representation $\mu$.  That is, the Hitchin fibers are `$N_\mu$-eigensets'.
Plausibly (one could check numerically), 
in fact $N_\mu$ is the only component of $|H_\mu|$ which acts nontrivially on a generic Hitchin fiber, and the fibers
are in fact $|H_\mu|$-eigensets.  

Now, for $D$-modules and constructible sheaves, the image of $|\, \cdot \,|$ consists only of conic subsets of the cotangent bundle, and in particular never $h^{-1}(b)$.  One could however imagine that there is some other setup $[\, \cdot \,], |\, \cdot \,|$ in which nonconic subsets do appear, but which admits a comparison to a theory of D-modules or constructible sheaves.  
For instance, one can imagine that the approach of 
Beilinson and Drinfeld -- ``quantizing the Hitchin system'' -- has some (perhaps known) interpretation along these lines where  $[\, \cdot \,]$ is  quantization of the cotangent bundle with some and $|\, \cdot \,|$ is taking the support in some semiclassical limit.  Ideas along these lines
 include \cite{Arinkin, Donagi-Pantev-classical, Bezrukavnikov-Braverman}.  Another such setup is nonabelian Hodge theory \cite{Donagi-Pantev, Donagi-Pantev-parabolic}. 

 \vspace{2mm}

% At least for ordinary manifolds $X$, another such functor would take  
% $[X]$ to be a Fukaya category of $T^*X$ defined over the Novikov ring $\Lambda_0 \subset \Lambda$,  consisting of those series for which the exponents $\lambda_i$ are non-negative.  The virtue of working over $\Lambda_0$ is that such a Fukaya category would {\em not} be invariant under Hamiltonian isotopy, hence could plausibly admit a (nonconical) function recovering the underlying Lagrangian of an object.  (E.g., one can studying the nonvanishing of pairings with certain torsion objects which exist in arbitrarily small neighborhoods of points).  As the smooth Hitchin fibers $h^{-1}(b)$ are Lagrangian, they plausibly determine objects.  
% Unfortunately, we do not yet have a definition of a Fukaya category for stacks, and certainly for stacks like $T^* Bun_G(C)$.  
%In addition, 
%objects in Fukaya categories are a priori given by smooth Lagrangian submanifolds, and the natural Lagrangian associated to $H_\mu$ is singular. 
%One can define one's way out of these difficulties: if such a Fukaya category existed, it would optimistically satisfy some Fukaya/sheaf comparison theorems like \cite{Nadler-Zaslow, GPS3}, and so embed fully faithfully into a category of sheaves on $X$ with coefficients in the Novikov ring.  Such sheaf categories do admit a notion of non-conic microsupport \cite{Kuwagaki}, but do not obviously admit objects microsupported in the Hitchin fibers $h^{-1}(b)$.  (For some recent progress in this direction, see \cite{Kuwagaki-Petr-Shende}.)

A different way to access objects of non-conic origin is through the relationship between sheaves and Fukaya categories, as proposed in \cite{Kapustin-Witten}. 
One can regard this note as explaining what precisely can be done in this direction with existing technology.

\section{Fukaya categories and microsheaves} 

Our basic tool for producing microsheaves from Lagrangians is the recent comparison theorem resulting
from the works \cite{GPS1, Shende, GPS2, GPS3, Nadler-Shende}. 

\begin{theorem} \label{GPS3 comparison} \cite{GPS3} 
Let $W$ be a Weinstein symplectic manifold.  
We write $\mathrm{Fuk}(W)$ for the wrapped Fukaya category.  
Then there is an equivalence $\mathrm{Ind}(\mathrm{Fuk}(W)) \cong 
\mu sh(\mathrm{Core}(W))$.\footnote{Or anti-equivalence, depending on a number of universal sign conventions; see \cite{GPS3}.} 
\end{theorem} 

In this section we give a brief review of some of the notions involved on the `Fukaya' side. 

\vspace{2mm} 

Given a symplectic manifold $(W, \omega)$, it is often possible to form an 
$A_\infty$-category whose objects are Lagrangian submanifolds of $W$, whose
morphism spaces $\Hom(L, L')$ are generated by intersection points of (appropriate perturbations of)
$L$ and $L'$, and whose structure maps -- differential, composition, higher compositions -- are 
defined by counting holomorphic curves with boundary along the Lagrangians.  
Foundational references include \cite{FOOO, Seidel}.  

We restrict attention to symplectic manifolds equipped with a Liouville structure: a vector field $Z$ with $Z \omega = \omega$, 
such that $Z$ identifies the complement of a compact set in $W$ with the positive symplectization of a contact manifold. 
Note $d(i_Z \omega) = \omega$, and $W$ is noncompact.  The locus of points  which remain bounded under the flow of $Z$
is termed the `spine' or `skeleton' or `core'. 

The prototypical example is a cotangent bundle $T^*M$ with the vector field $Z$ generating the radial dilation; 
for the compact subset take the co-disk bundle; the core is the zero section. 

To any Liouville $W$ is associated the `wrapped' Fukaya category $\mathrm{Fuk}(W)$ \cite{Abouzaid-Seidel, GPS1}. 
The basic objects of $\mathrm{Fuk}(W)$ are exact Lagrangian submanifolds, 
which are $Z$-conic outside a compact set.\footnote{
More precisely, the definition of $\Fuk(W)$ involves the choices of certain topological structures on $W$ and on the Lagrangians 
$L$.  In the present article, $W$ will always be Calabi-Yau, and we only consider oriented, spinned, graded Lagrangians.  
In this case we may define $\Fuk(W)$ over
any ring $k$, and equip the morphism spaces with a $\Z$-grading.}  
The term
`wrapped' refers to the fact that 
trajectories at infinity are incorporated into the morphism spaces, which are typically infinite dimensional when noncompact
Lagrangians are involved.  
We write $\mathrm{Fuk}(W)$ to mean the triangulated category generated by such objects and morphisms.

\begin{example}
$\mathrm{Fuk}(T^* S^1)$ is equivalent to the category of perfect dg modules for $k[t, t^{-1}]$. 
More generally, $\mathrm{Fuk}(T^* M)$ is equivalent to the category of perfect dg modules for the algebra of chains
on the based loop space of $M$ \cite{Abouzaid-cotangent}. 
\end{example} 

When $Z$ is the gradient flow of some Bott-Morse function, the Liouville structure is said to be Weinstein.  
In this case, the core is a finite union of locally closed isotropic submanifolds.  The cotangent bundle above is an example; 
the distance to the zero section gives the Bott-Morse function.  Stein complex manifolds (with the plurisubharmonic witness
providing the Morse function) yield more examples.  For a detailed study of these manifolds, see \cite{Cieliebak-Eliashberg}. 

%It is natural to view $\mu sh_K$ as a sheaf on $K$. 

%
%
%In \cite{Shende} it is observed that, by taking a high codimension embedding and thickening appropriately, 
%any set $X$ equipped with the germ of a contact embedding can be equipped with a similar sheaf of categories
%$\mu sh_X$.  (One checks that the only dependence on the embedding is through certain topological data
%identical to the choices necessary to define the Fukaya category.)  This construction can be applied to Liouville
%manifolds by considering the embedding in the contactization $\L_W = \L_W \times 0 \subset W \times \R$ and 
%defining $\mathfrak{Sh}(W) := \Gamma(\L_W, \mu sh_{\L_W})$.  In \cite{Nadler-Shende}, this category is 
%studied in detail; in particular it is shown that
%(nonconic!) exact Lagrangians give objects of $\mathfrak{Sh}(W)$, and that $\mathfrak{Sh}(W)$ is constant along
%deformations of Weinstein manifolds.  Best of all: 

\vspace{2mm}
The works \cite{GPS1, GPS2, GPS3} were written with coefficients $\mathbb{Z}$, as is common when working in exact symplectic geometry.  Let us recall that more generally (in non-exact contexts) it is necessary to work over the Novikov field $\Lambda$, and count each holomorphic curve $C$ formally weighted by $T^{\mathrm{Area}(C)}$.   
The reason why the Novikov field is needed in general is that the Gromov compactness theorem only holds given an a priori bound on the area of holomorphic curves, hence one counts by $T^{\mathrm{Area}}$ to avoid infinite sums.  But in the exact setting, these sums are always finite, since the relative homology class of the curve determines the area, by Stokes' theorem.  So one can, and many authors do, immediately set $T=1$.  Said differently, in the exact case, one can work over a polynomial version of the Novikov field, which one has at the end the freedom to either set $T=1$ or formally complete in $T$ to pass to the Novikov field. 
The arguments of \cite{GPS1, GPS2, GPS3} apply without change in this setup (of course still restricting, for now, to exact Lagrangians).   So as to subsequently adapt the results of those works to include some non-exact Lagrangians, we will work henceforth over the Novikov field.

\section{Nonexact lagrangians} 

The comparison result of Theorem \ref{GPS3 comparison} cannot be immediately applied to prove 
Theorem \ref{intro main theorem} because the wrapped Fukaya category used in \cite{GPS1, GPS2, GPS3} 
only allows {\em exact} Lagrangians, and Hitchin fibers $h^{-1}(b)$ are not exact unless $b=0$.\footnote{The nilpotent cone 
$\mathcal{N} = h^{-1}(0)$ is exact, and its smooth components do provide objects of the wrapped Fukaya category, 
and support corresponding microsheaves.  The existence of such microsheaves is elementary, 
and does not require the comparison theorem.}  The purpose of the present section is to extend the comparison theorem
to incorporate nonexact Lagrangians.

Let us first recall that Floer homology between nonexact Lagrangians is well defined if all Lagrangians involved are 
either (1) `tautologically unobstructed'
i.e. bound no holomorphic disks, or more generally (2) are equipped  with `bounding cochains' in the sense of \cite{FOOO}. 
However, in either case, one must work with coefficients in the Novikov field $\Lambda$.

Let us write $\mathrm{Fuk}^+(W)$ for  
the category defined in \cite[Remark 2.32]{GPS2}: objects are pairs $(L, J)$ where $L$ is an eventually conic (but not necessarily exact) Lagrangian,
and $J$ is an eventually conic compatible with the symplectic form $\omega$, such that 
$L$ bounds no holomorphic disks.\footnote{This does not require the general foundations of \cite{FOOO}.  Using \cite{FOOO}, 
we could more generally allow Lagrangians which do bound holomorphic disks but are equipped with boundary cochains.}

It is obvious from the definitions that $\mathrm{Fuk}(W) \subset  \mathrm{Fuk}^+(W)$.  To apply Theorem \ref{GPS3 comparison}
to nonexact Lagrangians, it will suffice to show that this inclusion is in fact an equivalence.  Doing so is the purpose of the remainder 
of the present section, and the essential new technical contribution of the present article.

\vspace{2mm}
 
We recall that \cite[Thm. 1.5; Sec. 8]{GPS2} gives
a K\"unneth embedding: $\mathrm{Fuk}(V) \otimes \mathrm{Fuk}(W) \hookrightarrow \mathrm{Fuk}(V \times W)$. 
Two important observations needed to establish this result are: first, that `product' wrappings in $V \times W$ 
are cofinal in all wrappings, and second, that while the product of eventually conic but not conic Lagrangians
is not eventually conic, one can straighten out the product to achieve this.  

Rather than contemplate straightening
products where one factor is nonexact, we will make do with the following weaker result: 

\begin{proposition}\label{prop: weak Kunneth}
There is a morphism $\kappa: \mathrm{Fuk}(V \times W)^{op} \to \mathrm{Mod}(\mathrm{Fuk}^+(V) \otimes \mathrm{Fuk}^+(W))$, with the following
properties: 
\begin{enumerate}
\item \label{conics} If $L \subset V$ and $M \subset W$ are both 
(not just eventually) conic, the module $\kappa(L \times M)$ is represented by $L \otimes M$. 
\item \label{diagonal} When $\Delta \subset (-W) \times W$ is the diagonal, then 
$\kappa(\Delta) \subset \mathrm{Mod}(\mathrm{Fuk}^+(-W) \otimes \mathrm{Fuk}^+(W)) = 
\mathrm{Mod}(\mathrm{Fuk}^+(W)^{op} \otimes \mathrm{Fuk}^+(W))$ is the diagonal bimodule.   
\end{enumerate} 
\end{proposition} 
\begin{proof}
The construction of $\kappa$ proceeds exactly as in \cite[Sec. 8.2]{GPS2}; we give the highlights.  
Recall that the wrapped Fukaya category $\mathrm{Fuk}(W)$  is by definition a localization 
of a `directed' category $\cO(W)$.   As in \cite[Sec. 8.2]{GPS2}, one counts appropriate holomorphic quilts to define 
a natural $A_\infty$ functor to chain complexes, $$\cO(V \times W)^{op} \otimes \cO^+(V) \otimes \cO^+(W) \to \mathrm{Ch}$$
In this formulation the necessary Gromov compactness is deduced from the eventual conicity of Lagrangians in each
factor separately, not the (false) conicity of product Lagrangians.  
Such a map is (by definition) a morphism 
$\cO(V \times W)^{op} \to \mathrm{Mod}(\cO^+(V) \otimes \cO^+(W))$.  As in \cite[Sec. 8.2]{GPS2}, 
one sees that this descends along the localization to wrapped categories by using the fact that this localization on $\cO(V \times W)$
can be implemented using only product wrappings on $V \times W$.  

The assertions \eqref{conics}, \eqref{diagonal} are essentially tautologies in the definitions of the holomorphic quilts involved.  
\end{proof} 

The way in which Proposition \ref{prop: weak Kunneth} is weaker (and the reason it is easier to prove) 
than the K\"unneth result of \cite[Thm. 1.5]{GPS2}
is that it does not assert a priori that $\kappa$ lands in the representable modules.  
Establishing this representability in \cite{GPS2} 
required considering straightening of products of Lagrangians, which we do not know how to do when one factor is noncompact and the other is nonexact.  
Nevertheless: 

\begin{proposition}
When $V, W$ are Weinstein, the image of $\kappa$ consists of representable bimodules.
\end{proposition} 
\begin{proof}
Proposition \ref{prop: weak Kunneth} \eqref{conics} asserts that the images under $\kappa$ of products of conic elements 
are representable.  
By Theorem \cite[1.13]{GPS2}, the category $\Fuk(V \times W)$ is generated by cocores, which are themselves products of 
the (conic) cocores of $V$ and $W$ (see \cite[Sec. 9.1]{GPS2}).  
\end{proof} 

\begin{remark}
Note this does not give a new argument for \cite[Thm. 1.5]{GPS2}, since  \cite[Thm. 1.13]{GPS2} is proved using \cite[Thm. 1.5]{GPS2}. 
\end{remark}

\begin{theorem} \label{thm: nothing new} 
For a Weinstein manifold  $W$, the inclusion $\mathrm{Fuk}(W) \subset \mathrm{Fuk}^+(W)$ is an equivalence. 
\end{theorem}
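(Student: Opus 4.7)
The inclusion $\Fuk(W) \subset \Fuk^+(W)$ is tautologically fully faithful, since morphism spaces are defined identically on objects of $\Fuk(W)$. The content of the theorem is therefore essential surjectivity: every compact unobstructed Lagrangian $L$ (with bounding cochain $b$) must already belong to $\Fuk(W)$, i.e., must be isomorphic to a finite twisted complex of exact $Z$-conic Lagrangians. The plan is to deduce this from the cocore generation theorem of \cite{GPS2}, combined with a finiteness argument for compact objects.

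Recall the generation result: for a Weinstein manifold $W^{2n}$ of finite type, $\Fuk(W)$ is split-generated by the finite collection of cocores $D_1, \dots, D_k$ of the index-$n$ handles; these are properly embedded exact Lagrangian disks. A formal consequence is that for any object $X$ of $\Fuk^+(W)$, the Yoneda-type evaluation map from the bar resolution built out of $\bigoplus_i D_i$ and the module $\bigoplus_i \Hom(D_i, X)$ over the cocore endomorphism algebra converges to $X$ whenever the module in question is perfect. Thus it suffices to check that for compact $L$ with bounding cochain, this Yoneda module is perfect over the cocore algebra.

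For such $L$ the required finiteness is geometric. After a compactly supported Hamiltonian perturbation, the boundary at infinity of $D_i$ is pushed off any neighborhood of $L$, so $D_i \pitchfork L$ is a finite transverse intersection. The corresponding Floer complex is then a finite-rank free module over $\Lambda$, and both its differential and the deformation by $b$ are area-weighted counts that converge in $\Lambda$ by Gromov compactness and the FOOO machinery invoked earlier in this section. Since there are only finitely many cocores, the Yoneda module over the cocore algebra is perfect, and $L$ is presented as a finite twisted complex in the $D_i$, hence lies in $\Fuk(W)$.

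The main obstacle is that the generation theorem in \cite{GPS2} is stated for the exact wrapped category, while we need it to apply to $\Fuk^+(W)$. Their proof is a corepresentability argument that rests on stop removal, Weinstein handle attachment, and counts of holomorphic strips and triangles in exact Lagrangian cobordisms. One must reread each of these ingredients with area-weighted disk counts over $\Lambda$ and allow the insertion of compact Lagrangians with bounding cochains among the test objects. The key compatibilities to check are the deformation invariance of unobstructedness under Hamiltonian isotopy and handle attachment (standard from \cite{FOOO}), and the convergence of all the area-weighted cobordism operations in $\Lambda$; once these are in place, the GPS2 argument runs unchanged and delivers the desired equivalence $\Fuk(W) = \Fuk^+(W)$.
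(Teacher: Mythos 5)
Your reduction of the theorem to generation of $\Fuk^+(W)$ by the cocores identifies the right target, but the route you take to it has a genuine gap. The assertion that the bar/Yoneda evaluation map converges to $X$ ``whenever the module in question is perfect'' is not a formal consequence of the GPS2 generation theorem: perfectness of the restricted module $\bigoplus_i \Hom(D_i,X)$ does not imply that the counit of the restriction--induction adjunction is an equivalence. (An object of $\Fuk^+(W)$ right-orthogonal to all the cocores has zero, hence perfect, restricted Yoneda module, and your criterion would force it to vanish; ruling out such objects is exactly the content of generation, not something that follows from finiteness of intersection numbers.) Your fallback --- rerunning the entire GPS2 corepresentability and stop-removal machinery over $\Lambda$ with compact nonexact Lagrangians carrying bounding cochains inserted as test objects --- would close the gap if executed, but it is precisely the heavy lifting your proposal does not carry out, and the compatibility of bounding cochains with those cobordism operations is not simply ``standard from FOOO'' in the form you need.

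The paper circumvents all of this by resolving the diagonal. It first constructs (Proposition~\ref{prop: weak Kunneth}) a weak K\"unneth functor $\kappa:\Fuk(V\times W)^{op}\to\mathrm{Mod}(\Fuk^+(V)\otimes\Fuk^+(W))$, defined on the directed categories by strip counts whose confinement uses only the eventual conicity of the Lagrangians in each factor. Lemma~\ref{lem: kunneth properties} records that $\kappa$ sends a product of conic Lagrangians to the corresponding representable module and sends the diagonal $\Delta\subset(-W)\times W$ to the diagonal bimodule of $\Fuk^+(W)$. Since $\Delta$ is an exact object of the undeformed category $\Fuk(W\times -W)$, the GPS2 generation theorem applies verbatim there to resolve $\Delta$ by products of cocores; pushing that resolution through $\kappa$ places the diagonal bimodule of $\Fuk^+(W)$ in the subcategory generated by the $D_i^{*}\otimes D_j$, and the linear-algebra-style lemma at the start of the paper's proof then shows the cocores generate $\Fuk^+(W)$. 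The point you are missing is that a generation theorem need only ever be invoked in the exact category $\Fuk(W\times -W)$, so no re-examination of the GPS2 proof in the nonexact setting is required. To salvage your approach you would need to replace the perfectness claim with an actual generation criterion (e.g.\ a split-generation criterion via the open--closed map), which amounts to a different but comparably nontrivial piece of Floer theory.
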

\begin{proof}
Let us first recall some tautological algebraic facts.  Consider a dg or $A_\infty$ category  $A$, with diagonal bimodule
$\Delta \in (A^{op} \otimes A)-\mathrm{mod}$, i.e. $\Hom_A(a, b) \cong \Delta(a, b)$.  Note that 
$b \mapsto \Delta(\cdot, b) \in A^{op}-\mathrm{mod}$ is the Yoneda embedding.  
Suppose $\Delta$ is representable by an object in $A^{op} \otimes A$ which is expressable as a 
complex of objects with terms $a_i \otimes b_i$.  Then $\Delta(\cdot, b)$ is expressable as 
a complex of objects with terms $\Hom_{A^{op}}(a_i, \cdot) \otimes \Hom_A(b_i, b)$.   That is, the $a_i$ generate $A$. 
(This is the categorification of the statement that if $V$ is a vector space and $1_V = \sum a_i \otimes b_i$, then
the $a_i$ must span $V$.) 

Now Proposition \ref{prop: weak Kunneth} \eqref{diagonal} asserts that the 
diagonal bimodule of $\mathrm{Fuk}^+(W)$ is the image of the geometric diagonal
in $W^- \times W$.  The diagonal is conic and exact, so already an element of $\mathrm{Fuk}(W^- \times W)$, where 
it is generated by cocores of $W^- \times W$ by  \cite[Theorem 1.13]{GPS2}.  These cocores are themselves 
products of the cocores of $W^-$ and $W$ (see \cite[Sec. 9.1]{GPS2}).   Thus, the cocores (which were already elements
of $\Fuk(W)$) generate $\Fuk^+(W)$. 
\end{proof}

\begin{corollary} \label{cor: novikov GPS3} 
Over the Novikov field,
$\mathrm{Ind}(\mathrm{Fuk}^+(W))  \cong \mu sh (\mathrm{Core}((W))$. $\square$
\end{corollary}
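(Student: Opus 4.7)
The plan is to deduce the corollary by concatenating two equivalences that are already in hand. Theorem \ref{thm: nothing new} supplies an equivalence $\Fuk(W) \simeq \Fuk^+(W)$ of small pretriangulated $A_\infty$-categories. Applying the Ind-completion functor preserves equivalences (it takes fully faithful functors to fully faithful ones, and essentially surjective functors to essentially surjective ones after passage to ind-objects), so we obtain $\mathrm{Ind}(\Fuk^+(W)) \simeq \mathrm{Ind}(\Fuk(W))$. Composing with the equivalence $\mathrm{Ind}(\Fuk(W)) \simeq \mathfrak{Sh}(W)$ of \cite{GPS3} recalled in the previous section yields the desired identification.

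The one piece of bookkeeping is that all three categories in this chain must be defined over the same ground ring. The statement of \cite{GPS3} quoted earlier is written at $T=1$, whereas $\Fuk^+(W)$, because it incorporates compact non-exact Lagrangians and bounding cochains, is intrinsically $\Lambda$-linear. As the discussion preceding Theorem \ref{thm: nothing new} makes explicit, the arguments of \cite{GPS1, GPS2, GPS3} go through verbatim when each holomorphic disk is weighted by $T^{\mathrm{Area}}$ and coefficients are taken in $\Lambda$; analogously, the microsheaf construction of \cite{Shende, Nadler-Shende} is compatible with replacing the coefficient ring by $\Lambda$ throughout, since the sheafification of $\mu sh^{pre}$ is insensitive to the choice of linear base. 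Thus the GPS3 equivalence is available $\Lambda$-linearly, matching the natural ground ring for $\Fuk^+(W)$, and the chain
$$ \mathrm{Ind}(\Fuk^+(W)) \;\simeq\; \mathrm{Ind}(\Fuk(W)) \;\simeq\; \mathfrak{Sh}(W) $$
makes sense as a sequence of $\Lambda$-linear equivalences.

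There is no real obstacle: once Theorem \ref{thm: nothing new} is established, the corollary is a formal consequence, which is presumably why the authors close its statement with $\square$. All of the substantive work lies in Theorem \ref{thm: nothing new} itself, which guarantees that adjoining compact Lagrangians with bounding cochains does not enlarge the wrapped Fukaya category of a Weinstein manifold; the remaining content of the corollary is simply the observation that equivalent small categories have equivalent Ind-completions.
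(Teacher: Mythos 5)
Your proposal is correct and matches the paper's (implicit) argument exactly: the corollary is stated with a $\square$ precisely because it is the formal composition of Theorem \ref{thm: nothing new} with the \cite{GPS3} equivalence after Ind-completion, with all the substance residing in the theorem. Your coefficient-ring bookkeeping likewise mirrors the paper's own remark that the microsheaf category is taken over the Novikov ring and its earlier discussion that the arguments of \cite{GPS1, GPS2, GPS3} remain valid with area-weighted disk counts over $\Lambda$.
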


\section{Floer theory in hyperk\"ahler manifolds} \label{sec: hk}

Let $W$ be a manifold.  We recall that a `hypercomplex' structure on $W$ is an
action of the quaternions on $TW$ such that $I, J, K$ determine integrable complex structures.  
A metric on a hypercomplex manifold is said to be hyperk\"ahler if it is K\"ahler for each complex structure separately.  
We denote the corresponding symplectic forms $\omega_I, \omega_J, \omega_K$.  
Note that $\Omega_I := \omega_J + i \omega_K$ gives a holomorphic symplectic form in complex structure $I$, etcetera. 

A submanifold which is holomorphic for all complex structures would be itself hypercomplex; in particular, 
of dimension divisible by 4.  Thus holomorphic curves cannot remain holomorphic under perturbation of the complex structure 
within the unit quaternion family.  A strong form of this fact will play a crucial role for us here: 

\begin{theorem} \label{thm: solomon-verbitsky} \cite{Solomon-Verbitsky} 
Let $W$ be a complete hyperk\"ahler manifold, and $L \subset W$ a holomorphic Lagrangian
for $(I, \Omega_I)$.   

Then for all but countably many complex structures of the form $J_\xi:= J \cos(\xi) + K \sin(\xi)$, 
there are no $J_\xi$-holomorphic curves with boundary along the $L$.   

When $J_\xi$ is moreover tamed by $\omega_J$, 
there is an $A_\infty$ equivalence $\mathrm{HF}_{\omega_J}(L, L) \cong \mathrm{H}^*(L)$; in particular, the LHS is formal. 
\end{theorem}

\begin{remark}
Throughout, $\omega_J$ may be replaced by any fixed $\omega_\theta := \mathrm{Re}(e^{\i \theta} \Omega_I) = 
\omega_J \cos(\theta) + i \omega_K \sin(\theta)$.  
\end{remark}

When $(W, \omega_J)$ is Liouville, by Theorem \ref{thm: solomon-verbitsky} we obtain objects
$(L, J_\xi) \in \Fuk^+(W, \omega_J)$. 

\begin{corollary} \label{cor: hkw}
Let $W$ be a complete hyperk\"ahler manifold such that $(W, \omega_J)$ is Weinstein.
Assume in addition that $L_1, \ldots, L_n$ are compact spinned $I$-holomorphic 
$\Omega_I$-Lagrangians.\footnote{Note that by virtue of holomorphicity, the $L_i$
are canonically oriented.  Also recall that a hyperk\"ahler manifold carries a Calabi-Yau form, with respect to which
holomorphic Lagrangians are special and hence admit gradings.} 
Then there are (a nonempty open set of) $\xi_i \in S^1$ such that $(L_i, J_{\xi_i})$ define objects of $\mathrm{Fuk}^+(W, \omega_J)$.
There are corresponding microsheaves $F_1, \ldots, F_n \in \mu sh(\mathrm{Core}(W))$ and isomorphisms
$$\Hom_{\mathrm{Fuk}^+(W)}((L_i, J_{\xi_i}), (L_j, J_{\xi_j}) \cong \Hom_{\mu sh(\mathrm{Core}(W))}(F_i, F_j)$$
The same holds if we equip the $L_i$ with ($\Lambda$-valued) unitary local systems.  So long as these local systems
are rank one, we have also an $A_\infty$ equivalence $\Hom_{\mathrm{Fuk}(W)}(L_i, L_i) \cong \mathrm{H}^*(L_i)$. 
\end{corollary}
\begin{proof}
Follows immediately from Corollary \ref{cor: novikov GPS3} and Theorem \ref{thm: solomon-verbitsky}.
\end{proof}

Let us recall a typical situation giving rise to hyperk\"ahler and Weinstein manifolds.

\begin{definition}
By a {\em complex Liouville manifold}, we mean a holomorphic symplectic manifold 
$(W, I, \Omega)$ carrying an $I$-holomorphic 
$\C^*$ action, which acts by a positive character on the line $\C \Omega$,   
such that for any $w \in W$, the limit over $z \in \C^*$ 
given by $\lim_{z \to 0} z w$ always exists.
\end{definition} 

\begin{lemma}
If $(W, I, \Omega)$ is complex Liouville, then each  real symplectic manifolds $(W, \mathrm{Re}(e^{i \theta} \Omega) )$
is Liouville.
\end{lemma} 
\begin{proof}
For the Liouville vector field, take a rescaled generator of the $\R^{> 0} \subset \C^*$ action.
\end{proof}

\begin{proposition} \label{hyperk weinstein}
Assume a complex Liouville manifold $(W, I, \Omega)$
extends to a complete hyperk\"ahler structure such that the $S^1 \subset \C^*$ action is Hamiltonian for $\omega_I$.  Then $(W, \mathrm{Re}(e^{i \theta} \Omega))$ is Weinstein.  
\end{proposition} 
\begin{proof}
The Liouville vector field is the gradient with respect to the hyperk\"ahler metric of the moment map for the $S^1$ action.  
\end{proof}

\begin{proof}[Proof of Theorem \ref{intro main theorem}]
Definition \ref{c h i s} translated through Proposition \ref{hyperk weinstein} allows us to invoke Corollary \ref{cor: hkw}.  (Connected components of fibers of an integrable system are tori, hence carry a canonical spin structure.) 
Finally, the hypothesis that $h:W\to A$ is proper and $\C^*$-equivariant for an action which contracts $A$ to a point makes clear that $h^{-1}(0)$ is the locus in $W$ which does not escape under the Liouville flow, i.e. its core. 
\end{proof}

\begin{remark} 
Returning to the setting of Higgs bundles, 
let $L$ be the object of the Fukaya category and $F$ be the microsheaf associated to some 
smooth Hitchin fibre $h^{-1}(b)$.
We can determine the rank of the local system obtained by further restricting $F$ to the locus of very-stable bundles.  Let  $T$ be the cotangent fiber at a very stable bundle.  Then 
by \cite{GPS3}, $\Hom(T, L)$ 
is the stalk of $F$.  
For a generic choice of cotangent fiber, the intersection $T \cap h^{-1}(b)$ is transverse.  The indices
of the intersections of complex Lagrangians are all equal,
so $\Hom(T, L)$ is concentrated in one degree, and has rank equal to the intersection
number of $T$ and $h^{-1}(b)$, which in turn is equal to the (positive) multiplicity of the locus of stable
bundles in the (nonreduced) nilpotent cone.  For $\mathrm{GL}_n$, this number is computed
explicitly in \cite{Hausel-Hitchin}.  

It may be interesting to explore the microstalks on other components, for which the slices
constructed in \cite{Collier-Wentworth, Hausel-Hitchin} should be useful. 
\end{remark}

\begin{remark} \label{rem: GMN}
The cotangent
bundle of a complex manifold is complex Liouville, but in general only carries an (incomplete) hyperk\"ahler metric 
in a neighborhood of the zero section \cite{Feix, Kaledin-cotangent}.  We expect that nevertheless some version of Theorem \ref{thm: solomon-verbitsky},
and hence Corollary \ref{cor: hkw}, should be true in this context.  Specialized to the case of cotangent bundles of curves, 
such a result would imply the existence of a Floer theoretic functor carrying local systems on spectral covers
to local systems on the base.  It is natural to expect that the trees which calculate this functor in the flow-tree limit \cite{Ekholm}
are given by the `spectral network' prescription of \cite{GMN}.  
\end{remark}

\begin{remark} \label{rem: multiplicative}
There are natural examples which are (compatibly) Weinstein and complex symplectic, but not complex 
Liouville or complete hyperk\"ahler, such as a neighborhood of a nodal elliptic curve in an elliptically fibered K3 surface. 
More general examples of such spaces appear in
\cite{Dancso-McBreen-Shende, Gammage-McBreen-Webster, Aganagic}.  It is desirable
to generalize Theorem \ref{thm: solomon-verbitsky} to this context as well.  
\end{remark}

\begin{remark}
The `brane quantization' picture of \cite{Gukov-Witten, Gaiotto-Witten} -- applied to the problem of 
geometric Langlands in \cite{Kapustin-Witten} -- would, if made rigorous, likely relate our
constructions to deformation and/or geometric quantization.  There is long-ongoing work on formulating
precise mathematical conjectures about the relation between quantization and Floer theory
in holomorphic settings \cite{Kontsevich, Soibelman}.  See \cite{Kuwagaki} for another point of view on this
conjectural correspondence in the special case of cotangent bundles of curves.  
\end{remark}

\section{Discussion}

Let us comment on a few difficulties to be overcome to bring our microsheaves
into some precise relation with objects
of interest to Langlands geometers. 

One problem is that we are forced to work over the Novikov field $\Lambda$, whereas geometric Langlands is typically formulated  over $\mathbb{C}$.  
As usual in mirror symmetry \cite{Kontsevich-Soibelman}, one should either prove convergence of holomorphic disk counts, or
formulate some version of the geometric Langlands correspondence appropriate to the nonarchimedean coefficients $\Lambda$. 

% Another difficulty is that the Hecke correspondence involves $T^*C$, which is not quite hyperk\"ahler.  
% We will need a version of Theorem \ref{thm: solomon-verbitsky}  appropriate to this setup; as we have seen in 
% the remarks concluding Section \ref{sec: hk}, this is also desirable for other reasons. 

Another pressing issue is the fact that, 
in order to work in a context where Floer theory is well defined, we restrict to the stable locus and assume there are no corresponding
semistables.   However, 
the locus of stable Higgs bundles is not preserved by the Hecke correspondences.  One can
microlocalize and restrict the Hecke correspondences to the stable locus, but it is not clear how the resulting algebra action is related to the original. 
Additionally, the discrete data $d$ -- which we fixed to avoid strictly semistable Higgs bundles -- is not preserved by the 
Hecke correspondences.  One could restrict to the part of the Hecke algebra which preserves $d$, but would have
to provide geometric representatives for generators of this algebra.  

More ambitiously, one could try and work with or around the symplectic singularities; see 
\cite{Manolescu-Woodward, Daemi-Fukaya} for some ideas in this direction in a similar setting.  

Other difficulties arise because Floer theory is, at present, only defined for smooth Lagrangians. 
In particular, without further constructions, we are restricted to studying smooth Hitchin fibers, and, to check the
Hecke eigen-ness
Floer theoretically, we would  need to provide smooth representatives for Hecke algebra elements.  
We hope that further developments around Lagrangian skeleta may help in this regard.

\bibliographystyle{plain}
\bibliography{refs}
\end{document}